\newtheorem{proposition}{Proposition}
\newtheorem{theorem}{Theorem}
\newtheorem{claim}{Claim}
\newtheorem{lemma}{Lemma}
\newtheorem*{theoremmain}{Theorem~\ref{main}}
\theoremstyle{definition}
\newtheorem{definition}{Definition}
\begin{document}

\title{A half-space theorem for graphs of constant mean curvature
$0<H<\frac{1}{2}$ in $\mathbb{H}^2\times\mathbb{R}$}

\author{L. Mazet}
\address{Universit\'e Paris-Est, Laboratoire d'Analyse et de Math\'ematiques
Appliqu\'ees (UMR 8050), UPEC, UPEM, CNRS, F-94010, Cr\'eteil, France}
\email{laurent.mazet@math.cnrs.fr}

\author{G. A. Wanderley}
\address{Universit\'e Paris-Est, Laboratoire d'Analyse et de Math\'ematiques
Appliqu\'ees (UMR 8050), UPEM, UPEC, CNRS, F-94010, Cr\'eteil, France}
\email{gaw.gabriela@googlemail.com}

\thanks{The authors were partially supported by the ANR-11-IS01-0002 grant}

\begin{abstract}
We study a half-space problem related to graphs in
$\mathbb{H}^2\times\mathbb{R}$, where $\mathbb{H}^2$ is the hyperbolic plane,
having constant mean curvature $H$ defined over unbounded domains in
$\mathbb{H}^2$.
\end{abstract}

\maketitle

\section{Introduction}

The half-space theorem by Hoffman and Meeks \cite{HoffmanMeeks} states that if a
properly immersed minimal surface $S$ in $\mathbb{R}^3$ lies on one side of some
plane $P$, then $S$ is a plane parallel to $P$. As a consequence, they proved
the strong half-space theorem which says that two properly immersed minimal
surfaces in $\mathbb{R}^3$ that do not intersect must be parallel planes.

These theorems have been generalized to some other ambient simply connected
homogeneous manifolds with dimension 3. For example, we have half-space theorems
with respect to horospheres in $\mathbb{H}^3$ \cite{RodriRosen}, vertical minimal planes
in $Nil_3$ and $Sol_3$ \cite{DaniHaus},\cite{DaniMeeksRosen} and entire minimal
graph in $Nil_3$ \cite{DaniMeeksRosen}.
 It is known that there is no half-space theorem for horizontal slices in
 $\mathbb{H}^2\times\mathbb{R}$, since rotational minimal surfaces (catenoids)
 are contained in a slab \cite{Nelli1}-\cite{Nelli2}, but one has half-space
 theorems for constant mean curvature $\frac{1}{2}$ surfaces in
 $\mathbb{H}^2\times\mathbb{R}$ \cite{HausRosenSpru}.

In \cite{Mazet}, the first author proved a general half-space theorem for constant mean
curvature surfaces. Under certain hypothesis, he proved that in a Riemannian
$3$-manifold of bounded geometry, a constant mean curvature $H$ surface on one
side of a parabolic constant mean curvature $H$ surface $\Sigma$ is an
equidistant surface to $\Sigma$.

In Euclidian spaces of dimension higher than $4$, there is no strong half-space
theorem, since there exist rotational proper minimal hypersurfaces contained in
a slab.

In \cite{Menezes}, Menezes proves a half-space theorem for some complete vertical
minimal graphs, more precisely, she looks at some particular graphs
$\Sigma\subset M\times\mathbb{R}$ over a domain $D\subset M$, where $M$ is a
Hadamard surface with bounded curvature, these graphs are called ideal Scherk graphs and
their existence was proved by Collin and Rosenberg in \cite{CollRose} for $\mathbb{H}^2$
and by Galvez and Rosenberg in \cite{GalvRosen} in the general case.

\begin{theorem}\label{ana}
Let $M$ denote a Hadamard surface with bounded curvature and let $\Sigma =
Graph(u)$ be an ideal Scherk graph over an admissible polygonal domain $D\subset
M$. If $S$ is a properly immersed minimal surface contained in
$D\times\mathbb{R}$ and disjoint from $\Sigma$, then $S$ is a translate of
$\Sigma$.
\end{theorem}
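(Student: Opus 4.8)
The plan is to run a Hoffman--Meeks-type sliding argument in which the barrier family is the collection of vertical translates of $\Sigma$ itself. Write $\Sigma = \mathrm{Graph}(u)$ with $u\colon D\to\mathbb{R}$ solving the minimal surface equation, and for $t\in\mathbb{R}$ set $\Sigma_t = \mathrm{Graph}(u+t)$; then $u+t$ is again a solution, so each $\Sigma_t$ is minimal and the family $\{\Sigma_t\}_{t\in\mathbb{R}}$ foliates $D\times\mathbb{R}$. Since $S$ is connected and disjoint from $\Sigma_0=\Sigma$, it lies in one of the two components $\{z>u(p)\}$, $\{z<u(p)\}$ of $(D\times\mathbb{R})\setminus\Sigma$; the two cases being symmetric, assume $S$ lies strictly below $\Sigma$. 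Let $h\colon S\to\mathbb{R}$, $h(p,z)=z-u(p)$, record the leaf of the foliation through a point of $S$; then $h<0$ on $S$, so $c:=\sup_S h\in(-\infty,0]$. If $c$ is attained at some $q_0\in S$, then on a neighborhood of $q_0$ the surface $S$ is a graph lying below $\Sigma_c$ and tangent to it at $q_0$; both graphs are minimal, so the strong maximum principle, together with connectedness and properness of $S$, forces $S=\Sigma_c$. As $S\cap\Sigma_0=\emptyset$ this gives $c<0$ and exhibits $S$ as a vertical translate of $\Sigma$, which is the conclusion. Hence the theorem reduces to showing that $c$ is attained at an interior point of $S$.

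After translating $S$ vertically by $-c$ we may assume $c=0$: $S$ is strictly below $\Sigma_0$ and $\sup_S h=0$. Choose $q_n=(p_n,z_n)\in S$ with $h(q_n)\to 0$. If some subsequence of $(p_n)$ stays in a compact subset of $\mathrm{int}(D)$, then $z_n=u(p_n)+h(q_n)$ is bounded, so the $q_n$ lie in a compact subset of $\mathrm{int}(D)\times\mathbb{R}$; properness of $S$ yields a subsequential limit $q_\infty\in S$ with $h(q_\infty)=0$, contradicting $S\cap\Sigma_0=\emptyset$. Therefore $(p_n)$ leaves every compact subset of $\mathrm{int}(D)$ and, along a subsequence, converges to a point of a side of the polygon or to one of its (possibly ideal) vertices. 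The ideal Scherk structure is decisive here: on a side $A_i$ where $u\to+\infty$ one has $z_n\to+\infty$, on a side $B_j$ where $u\to-\infty$ one has $z_n\to-\infty$, and near each such side $\Sigma_0$ is asymptotic to the vertical geodesic plane over the supporting geodesic.

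The main obstacle is to rule out this escape at the ideal boundary, and this is where the hypotheses --- bounded curvature of $M$, admissibility of $D$, and the known asymptotics of ideal Scherk graphs --- are used. The idea is to renormalize along the escaping sequence: translate vertically so that the heights are bounded, and bring $p_n$ to a fixed base point by ambient isometries when $M=\mathbb{H}^2$, or by a Cheeger--Gromov limit using the bounded geometry of $M$ in the general Hadamard case; under this renormalization $\Sigma_0$ subconverges to a minimal graph $\Sigma_\infty$ over a limit domain that, near the relevant side, is a vertical plane. To pass $S$ itself to a limit one needs local area or curvature bounds near the $q_n$: these come from the fact that along the escaping sequence $S$ is squeezed between $\Sigma_0$ and the vertical plane to which it is asymptotic, hence confined to a thin region, so monotonicity supplies the bounds (when they are unavailable one extracts instead a minimal lamination limit). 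The resulting limit surface, or lamination leaf, lies on one side of $\Sigma_\infty$ and touches it at the base point, so a maximum principle in the limit --- an instance of a maximum principle at infinity for the original $S$ --- forces it to coincide with $\Sigma_\infty$, and unwinding the renormalization contradicts the strict separation of $S$ from $\Sigma_0$. Once every escape scenario is excluded, $c$ is attained at an interior point and the reduction of the first step completes the proof. The technical heart, and the step I expect to be hardest, is exactly this compactness-and-maximum-principle analysis near the ideal edges; the sliding and the interior maximum principle are routine.
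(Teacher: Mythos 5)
There is a genuine gap, and it sits exactly where you predict: the exclusion of escape along the ideal boundary. Note first that the paper does not prove Theorem~\ref{ana} (it is quoted from Menezes); but it proves the directly analogous Theorem~\ref{main}, and both proofs rely on a mechanism your sketch does not supply. Your barrier family consists of the vertical translates $\Sigma_t$ of $\Sigma$ itself. Since these foliate $D\times\mathbb{R}$, sliding them against $S$ can never by itself produce an interior tangency: the whole difficulty of a half-space theorem is that $\sup_S h$ need not be attained, and this is precisely why Hoffman--Meeks introduce catenoids rather than sliding planes, why Menezes builds a discrete family of non-translate barriers, and why this paper builds a single barrier $\beta$ (via Perron's method, Proposition~\ref{prop1}) adapted to $S$, pinched between $u$ and $\min(u+\epsilon',g)$, and then invokes a Collin--Krust flux monotonicity argument (Proposition~\ref{prop2}) to show $\beta$ must equal $u+\epsilon'$, contradicting $\inf(g-u)=0$. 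That flux argument is the replacement for the ``maximum principle at infinity'' you appeal to, and it is the substantive content of the proof.

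Your proposed substitute --- renormalize along the escaping sequence $q_n$, extract a limit lamination, and apply the maximum principle to the limit --- does not close the argument as sketched. Two concrete problems: (i) the compactness input is unjustified, since near $q_n$ the surface $S$ is only bounded on \emph{one} side by $\Sigma_0$ (you know $h<0$ near $q_n$ but nothing prevents $h$ from being very negative at nearby points of $S$), so $S$ is not confined to a thin slab and monotonicity gives no area bound; and (ii) even granting a limit, near an edge $A_i$ the renormalized graphs $\Sigma_0-z_n$ converge to the vertical plane $A_i\times\mathbb{R}$, and the conclusion that a limit leaf of $S$ coincides with this plane is \emph{not} a contradiction --- it is perfectly consistent with $S$ being strictly below $\Sigma_0$ and merely asymptotic to $A_i\times\mathbb{R}$ (just as two distinct translates $\Sigma_t$, $\Sigma_{t'}$ are disjoint yet share the same vertical plane as boundary limit). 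Ruling out this asymptotic behaviour requires a global quantitative input, which in the paper is exactly the flux estimate over the horocycle cut-offs $C_i^n$ with $|C_i^n|\to 0$. So the first step of your proposal (reduction to attainment of the sup, plus the interior maximum principle) is fine, but the ``technical heart'' you defer is not a technicality: it is the theorem, and the route you indicate for it does not work without being replaced by a barrier-plus-flux argument of the kind the paper carries out.
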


In this paper, we are interested in the case where the graph has constant mean curvature.
More precisely, we consider graphs over $\mathbb{H}^2$ with constant mean curvature
$0<H<\frac12$. In that case, we prove a result similar to the one of Menezes. We notice
that the value $H=\frac12$ is critical in this setting (see \cite{Mazet2} for the
$H=\frac12$ case).


The graphs that we will work with are graphs of functions $u$ defined in a
domain $D\subset \mathbb{H}^2$ whose boundary $\partial D$ is composed of complete arcs $\{A_i\}$
and $\{B_j\}$, such that the curvatures of the arcs with respect to the domain
are $\kappa (A_i)=2H$ and $\kappa (B_j)=-2H$. These graphs will have constant
mean curvature and $u$ will assume the value $+\infty$ on each $A_i$ and $-\infty$
on each $B_j$. These domains $D$ will be called \textit{Scherk type domains} and
the functions $u$ \textit{Scherk type solutions}. The existence of these graphs
is assured by A. Folha and S. Melo in \cite{FolhaMelo}. There, the authors give
necessary and sufficient conditions on the geometry of the domain $D$ to prove
the existence of such a solution. In this context, we prove the following result.

\begin{theorem}\label{main}
Let $D\subset \mathbb{H}^2$ be a Scherk type domain and $u$ be a Scherk type
solution over $D$ (for some value $0<H<\frac12$). Denote by $\Sigma= Graph (u)$. If $S$ is
a properly immersed CMC $H$ surface contained in $D\times\mathbb{R}$ and above $\Sigma$,
then $S$ is a vertical translate of $\Sigma$.
\end{theorem}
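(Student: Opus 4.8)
The plan is to follow the now-standard strategy for half-space theorems of Hoffman--Meeks type, adapted to the CMC $0<H<\frac12$ setting: first produce a one-parameter family of "barrier" CMC $H$ graphs coming from vertical translates of $\Sigma$ together with suitable perturbations, then use the maximum principle (interior and at infinity) to slide these barriers against $S$ and conclude that $S$ must coincide with one of them. Throughout, the orientation of $S$ is chosen so that its mean curvature vector points upward, consistently with lying above $\Sigma=\mathrm{Graph}(u)$.

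\textbf{Step 1 (Setting up the sliding family).} First I would consider the vertical translates $\Sigma_t=\mathrm{Graph}(u+t)$ for $t\ge 0$; each is a CMC $H$ graph over $D$, disjoint from $\Sigma$ for $t>0$, and they foliate the region $\{(x,s): x\in D,\ s>u(x)\}$. Since $S\subset D\times\mathbb{R}$ lies above $\Sigma=\Sigma_0$, I would define
\[
 t_0=\inf\{t\ge 0 : \Sigma_t\cap S=\emptyset\}.
\]
If $t_0=0$, then $S$ touches $\Sigma$ and the interior maximum principle (or the boundary maximum principle, after handling the behavior near $\partial D$ where $u\to\pm\infty$) forces $S=\Sigma$, so we are in the desired conclusion with translation parameter $0$. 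The substantive case is $t_0>0$: then $S$ lies above $\Sigma_{t_0}$ and, by properness, either $S$ touches $\Sigma_{t_0}$ at an interior point — again giving $S=\Sigma_{t_0}$ by the strong maximum principle — or the contact only happens "at infinity", i.e. as one escapes toward $\partial D$ or toward the ends of $S$.

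\textbf{Step 2 (Ruling out contact at infinity).} This is where the hypothesis $0<H<\frac12$ and the Scherk-type structure of $D$ enter, and it is the main obstacle. The idea, following Mazet's general half-space theorem \cite{Mazet} and Menezes' argument \cite{Menezes}, is that $\Sigma$ is a \emph{parabolic} CMC $H$ surface: the conformal type of the graph over a Scherk type domain is parabolic, so there is no room for a CMC $H$ surface to approach $\Sigma$ asymptotically without touching it. Concretely I would argue by contradiction: if $\inf_{\Sigma_{t_0}}\mathrm{dist}(\cdot,S)=0$ but the infimum is not attained, one extracts a sequence of points on $S$ converging (after vertical translation) to points escaping every compact subset of $D\times\mathbb{R}$; one then uses the geometry of the arcs $A_i$ (curvature $2H$) and $B_j$ (curvature $-2H$) bounding $D$, together with uniform curvature/area estimates for CMC $H$ surfaces in $\mathbb{H}^2\times\mathbb{R}$ of bounded geometry, to build a limit surface that violates either the maximum principle or the parabolicity of $\Sigma$. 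Equivalently, one can use a flux/height-function argument: the function $h$ = vertical coordinate restricted to $S$, compared against $u+t_0$, is a supersolution of an elliptic equation on a parabolic surface bounded below, hence constant — forcing $S=\Sigma_{t_0}$.

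\textbf{Step 3 (Conclusion).} Once interior contact is produced in every case, the strong maximum principle for the CMC $H$ equation (graphs satisfy a quasilinear elliptic PDE, and two CMC $H$ surfaces tangent at an interior point with the same mean curvature vector must coincide) gives $S=\Sigma_{t_0}$, i.e. $S$ is a vertical translate of $\Sigma$. The delicate points that I expect to require the most care are: (i) justifying that $t_0$ is finite and that $S$ genuinely lies above $\Sigma_{t_0}$ — this needs properness of $S$ and the fact that $S\subset D\times\mathbb{R}$ with $D$ having the specific boundary curvature conditions, so that $S$ cannot "leak out" near $\partial D$; and (ii) the contact-at-infinity analysis of Step 2, which is the real heart of the theorem and where the restriction $H<\frac12$ is essential, since at $H=\frac12$ the relevant barriers degenerate and the conclusion genuinely fails (cf. \cite{Mazet2}).
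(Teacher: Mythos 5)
Your Step~2 is the entire content of the theorem, and as written it is not a proof but a list of candidate strategies, none of which is carried out and none of which works as stated. The sliding family $\Sigma_t=\mathrm{Graph}(u+t)$ is exactly the kind of ``a priori'' continuous family of barriers that the introduction of the paper says cannot be exploited here: all vertical translates have the same asymptotic behaviour along $\partial D$ (each blows up to $+\infty$ on every $A_i$ and to $-\infty$ on every $B_j$), so when the contact between $S$ and $\Sigma_{t_0}$ occurs only at infinity the family gives no information. Concretely: (i) the parabolicity of the Scherk-type CMC graph is asserted, not proved, and is not established in any of the references for $H\in(0,\frac12)$; even granting it, Mazet's general half-space theorem has further hypotheses to verify. (ii) The limit-surface argument requires uniform curvature estimates for $S$, but $S$ is only assumed properly immersed --- no embeddedness, stability, or curvature bound --- so you cannot extract a limit from a divergent sequence of points of $S$. (iii) The ``height function on $S$ is a supersolution on a parabolic surface'' argument needs $S$ itself (not $\Sigma$) to be parabolic and to carry a usable elliptic equation, which is again unavailable for a general properly immersed CMC surface. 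A smaller issue: $t_0=\inf\{t\ge 0:\Sigma_t\cap S=\emptyset\}$ is the wrong quantity (it vanishes as soon as $S\cap\Sigma=\emptyset$); you mean the supremum of the $t$ for which $\Sigma_t$ is still disjoint from and below $S$, and even then finiteness of $t_0$ and the dichotomy ``interior contact versus contact at infinity'' require properness in an essential way.

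For contrast, the paper's resolution of exactly this difficulty is different and self-contained. It works with the lower envelope $g(p)=\inf\{t:(p,t)\in S\}$ and reduces the theorem to showing that $g>u$ with $\inf(g-u)=0$ is impossible. To that end it builds a single barrier adapted to $S$: first an annular solution $v$ on $B_y'\setminus B_y$ equal to $u$ on $\partial B_y'$ and to $u+\epsilon'$ on $\partial B_y$ (Lemma~\ref{lemma1}, via the implicit function theorem), then, by the Perron method with $\max(u,v)$ as subsolution and $\min(u+\epsilon',g)$ as upper bound, a solution $\beta$ of the CMC equation on $D\setminus\overline{B_y}$ with $\beta=u+\epsilon'$ on $\partial B_y$ and $u\le\beta\le\min(u+\epsilon',g)$ (Proposition~\ref{prop1}). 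The decisive step is Proposition~\ref{prop2}: a flux computation over the domain, using the Folha--Melo boundary estimates on the $A_i^n$, $B_i^n$, $C_i^n$ and the Collin--Krust lemmas, forces $\beta\equiv u+\epsilon'$; since $\beta\le g$ this yields $g\ge u+\epsilon'$, contradicting $\inf(g-u)=0$. It is this flux/uniqueness argument on $D$ --- not parabolicity of $\Sigma$ or compactness of $S$ --- that rules out contact at infinity, and it is precisely the step your proposal leaves open.
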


The original idea of Hoffman and Meeks is to use the $1$-parameter family of catenoids as
\textit{a priori} barriers to control minimal surfaces on one side of a plane (here
\textit{a priori} means that the choice of catenoids is independent of the particular
minimal surface you want to control). In more
general situations, it is not easy to construct such a continuous family of barriers so
some authors use a discrete family (see for example \cite{DaniMeeksRosen,RosenSchulzSpruc}).
Menezes works also
with such a discrete family. In our case, it does not seem possible to construct such a
family in an easy way. Our approach is based on the existence of only one barrier whose
construction depends on the particular surface $S$.


This paper is organized as follows. In Section 2, we will give a brief
presentation of the Scherk type graphs and the result of Folha and Melo. Section 3
contains the proof of Theorem~\ref{main}, so one of the main step is the existence of the
barriers which uses the Perron method. We also prove a uniqueness result for the constant
mean curvature equation. 

\section{Constant mean curvature Scherk type graphs} 

In this section we present the theorem by A. Folha and S. Melo in
\cite{FolhaMelo} that assure the existence of constant mean curvature graphs
which take the boundary value $+\infty$ on certain arcs $A_i$ and $-\infty$ on
arcs $B_j$. All along this section $H$ will be a real constant in $(0,\frac12)$.

First, let us fix some notations. Let $\mathbb{H}^2$ be the hyperbolic plane, and
$\mathbb{H}^2\times\mathbb{R}$ be endowed with the product metric. Let $D$ be a simply
connected domain in $\mathbb{H}^2$ and $u:D\longrightarrow \mathbb{R}$ a
function. Denote by 
$$\Sigma=Graph (u)=\{(x,u(x)), x\in D\}.$$

The upward unit normal to $\Sigma$ is given by

\begin{equation}
N = \frac{1}{W}\big(\partial_t -  \nabla u\big),
\end{equation}

where

\begin{equation}
W = \sqrt{1 +|\nabla u|^2}.
\end{equation}

The graph $\Sigma$ has mean curvature $H$ if $u$ satisfies the equation

\begin{equation}
\label{linear}
\mathcal{L}u:= \textrm{div} \frac{\nabla u}{W}-2H=0,
\end{equation}

where the divergence and the gradient are taken with respect to the metric on
$\mathbb{H}^2$. Let us now give some definitions.

\begin{definition}
We say that the boundary of a domain $D$ in $\mathbb{H}^2$ is a \emph{$2H$-polygon}
if its boundary is made of a finite number of complete arcs with constant curvature $2H$
and the cluster points of $D$ in $\partial_\infty\mathbb{H}^2$ are the end-points of
these arcs. The arcs are called the edges of $D$ and the cluster points are the
vertices of $D$.
\end{definition}

We notice that a complete curve in $\mathbb{H}^2$ with constant curvature $2H$ is proper. 

If $\Omega$ is a domain whose boundary is a $2H$-polygon, we will denote by $A_i$ (resp.
$B_i)$ the arcs of the boundary whose curvature is $2H$ (resp. $-2H$) with respect to the
inward pointing unit normal.



\begin{definition} We say that a domain $D$ in $\mathbb{H}^2$ is a \emph{Scherk type
domain} if its boundary is a $2H$-polygon and if each vertex is the end point of one arc
$A_i$ and one arc $B_j$.
\end{definition}

Such a domain $D$ is drawn in Figure~\ref{fig}.

\begin{definition} Let $\Omega$ be a Scherk type domain. We say that $P$ is an
\emph{admissible inscribed polygon} if $P\subset \Omega$ is a domain whose boundary is a
$2H$-polygon and its vertices are among the ones of $\Omega$.
\end{definition}

Let $D$ be a Scherk type domain, in \cite{FolhaMelo}, Folha and Melo study the following
Dirichlet problem
\begin{equation}\label{dirichlet}
\begin{cases}
\mathcal{L}(u)=0 \textrm{ in }D\\
u=+\infty \textrm{ on }A_i\\
u=-\infty \textrm{ on }B_i
\end{cases}
\end{equation}

In order to state the result of Folha and Melo, let us introduce some notations.
Let $P$ be an admissible inscribed polygon in $D$ and let $\{d_i \}_{i\in I}$ denote the
vertices of $P$. Consider the set
$$
\Theta = \{(\mathcal{H}_i)_{i\in I} |\, \mathcal{H}_i\textrm{ is a horodisk at }d_i
\textrm{ and }\mathcal{H}_i \cap \mathcal{H}_j = \emptyset \textrm{ if } i \neq j \}.
$$
we notice that, by choosing sufficiently small horodisks, $\Theta$ is not empty.

Let $(\mathcal{H}_i)_{i\in I}$ be in $\Theta$ such that the following is true : each arc
$A_i$ and $B_j$ meets exactly two of these horodisks. Denote by $\tilde{A}_i$ the compact
arc of $A_i$ which is the part of $A_i$ outside these two horodisks. Let $|A_i |$ denote
the length of
$\tilde{A}_i$. We introduce the same notations of the $B_j$. For each arc $\eta_j\in
\partial P$, we also define $\tilde{\eta}_j$ and $|\eta_j |$ in the same way.

We define
$$
\alpha(\partial P)=\sum_{A_i\in \partial P} |A_i|, \quad \beta(\partial P)=\sum_{B_i\in \partial P} |B_i| \quad
\textrm{ and }\quad l(\partial P)=\sum_{j} |\eta_j|, 
$$

where $\partial P=\cup_j\eta_j$. We remark that a Scherk type domain has finite area. So we can introduce $\mathcal{A}(D)$ the area of $D$ and $\mathcal{A}(P)$ the area of $P$.






With these definitions we can state the main theorem of \cite{FolhaMelo}.

\begin{theorem}\label{abgail}
Let $D$ be a Scherk type domain. Then there exists a
solution $u$ for the Dirichlet problem \eqref{dirichlet} in $D$ if and only if for some
choice of the horodisks (in $\Theta$) at the vertices,
$$
\alpha(\partial D)=\beta(\partial D)+2H{\mathcal{A}}(D)
$$
and for any admissible inscribed polygons $P\neq D$,

$$
2\alpha(\partial P)<l(\partial P)+2H{\mathcal{A}}(\Omega) \textrm{ and }
2\beta(\partial P)<l(\partial P)-2H{\mathcal{A}}(\Omega).
$$
\end{theorem}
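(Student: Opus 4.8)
The plan is to recognize this as a Jenkins–Serrin type existence theorem for the constant mean curvature equation \eqref{linear}, and to run the whole argument through the flux calculus of the vector field $X_u := \frac{\nabla u}{W}$. The two basic facts I would use repeatedly are that a solution of \eqref{linear} satisfies $\operatorname{div} X_u = 2H$ and that $|X_u| = \frac{|\nabla u|}{\sqrt{1+|\nabla u|^2}} < 1$ pointwise. For any relatively compact subdomain $\Omega'$ with piecewise regular boundary, the divergence theorem then reads
$$
\int_{\partial \Omega'} \langle X_u, \nu\rangle \, ds = 2H\,\mathcal{A}(\Omega'),
$$
with $\nu$ the outer conormal. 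The whole theorem is an analysis of this single identity as $\Omega'$ is pushed out towards edges carrying $\pm\infty$ data.

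For the necessity direction I would first establish the boundary flux lemma: if $u\to +\infty$ along an edge $A_i$ then the flux of $X_u$ across a truncation of $A_i$ equals $+|A_i|$, while along a $-\infty$ edge $B_j$ it equals $-|B_j|$, and across any compact interior arc $\eta$ on which $u$ stays finite one has the strict bound $\left|\int_\eta \langle X_u,\nu\rangle\,ds\right| < |\eta|$. The first two follow because $X_u$ limits to the unit conormal as $u\to\pm\infty$, and the strict interior bound follows from $|X_u|<1$ together with compactness. Applying the divergence identity to $D$ and summing edge contributions gives $\alpha(\partial D)-\beta(\partial D)=2H\mathcal{A}(D)$, the asserted equality; applying it to an admissible inscribed polygon $P$, whose boundary splits into $A$-edges, $B$-edges and free interior edges $\eta_k$, and inserting the strict bound on each $\eta_k$ together with $\sum_k|\eta_k| = l(\partial P)-\alpha(\partial P)-\beta(\partial P)$, yields exactly the two strict inequalities. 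The delicate point is that the edges are complete and meet only at ideal vertices, so individual fluxes and lengths are infinite; this is why the horodisks in $\Theta$ are introduced, and one must check that the horocyclic truncation arcs cancel in pairs, so that the truncated identities are independent of the chosen horodisks.

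For the sufficiency direction I would use the Perron method together with an exhaustion. Under the displayed conditions I would solve, on an increasing exhaustion of $D$ by compact subdomains, the CMC $H$ Dirichlet problem with continuous boundary data equal to $+n$ on the traces of the $A_i$, $-n$ on the $B_j$, suitably interpolated. Existence of these approximants requires barriers, and this is exactly where the hypotheses $\kappa(A_i)=2H$ and $\kappa(B_j)=-2H$ enter, since the CMC $H$ arcs generate local upper and lower barriers (vertical CMC cylinders over the edges). Interior gradient estimates for \eqref{linear} and the maximum principle give monotonicity and local compactness, so a subsequence converges on a maximal open set to a solution $u$, and the only obstruction to global convergence is a nonempty divergence set $U$ on which the limit is $+\infty$ or $-\infty$.

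The crux, and the step I expect to be hardest, is the structure theorem for the divergence set: one shows that $\partial U\cap D$ consists of complete arcs of curvature $\pm 2H$ (a straight-line lemma adapted to the CMC $H$ setting), so that $U$ is itself an admissible inscribed polygon. Running the flux identity on $U$ and using the boundary flux lemma — now with the interior edges of $\partial U$ carrying $\pm\infty$ and hence achieving flux $\pm|\eta|$ — forces equality in one of the two strict inequalities of the hypothesis, a contradiction, unless $U=D$; and the case $U=D$ is excluded by the global equality $\alpha(\partial D)-\beta(\partial D)=2H\mathcal{A}(D)$. Hence the divergence set is empty, the limit $u$ is a genuine solution, and a final barrier argument at the edges shows that it attains the prescribed $\pm\infty$ boundary data, completing the proof. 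The main obstacles are thus the interior gradient estimate for the CMC equation, which is needed for compactness, and the fine geometric analysis of the divergence set near the ideal vertices where an $A$-edge and a $B$-edge come together at infinity.
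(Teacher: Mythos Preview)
The paper does not prove this theorem at all: immediately after the statement it writes ``The details and the proof of this theorem can be found in \cite{FolhaMelo}.'' So there is no argument in the paper to compare your proposal against; Theorem~\ref{abgail} is quoted as background from Folha--Melo, and the present paper's contribution begins only in Section~3.

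That said, your sketch is a faithful outline of the Jenkins--Serrin method as adapted to the CMC~$H$ equation on $\mathbb{H}^2$, and it is indeed the strategy carried out in \cite{FolhaMelo}: flux of $X_u=\nabla u/W$ via the divergence theorem, boundary flux lemmas on the $A_i$ and $B_j$, horodisk truncation to regularize the ideal vertices, monotone approximation with data $\pm n$, interior gradient estimates for compactness, and the structure/straight-line lemma for the divergence set showing it is an admissible inscribed polygon, whence the strict inequalities rule it out. One technical point you gloss over is that the horocyclic truncation arcs do \emph{not} literally ``cancel in pairs''; rather their total length tends to zero as the horodisks shrink, which is what makes the conditions independent of the choice in $\Theta$. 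Another is that the boundary behaviour $u\to\pm\infty$ on the $A_i$, $B_j$ requires more than ``a final barrier argument'': one needs the flux machinery again to exclude finite limits along subarcs. But these are refinements, not gaps in the overall plan.
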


It could seem that the conditions depend on the choice of the horodisks in $\Theta$,
actually they are independent of that choice if the horodisks are small enough. The
details and the proof of this theorem can be found in \cite{FolhaMelo}.

\section{The main Result}

In this section we will prove the following result

\begin{theoremmain}
Let $D\subset \mathbb{H}^2$ be a Scherk type domain and $u$ be a Scherk type
solution over $D$ (for some value $0<H<\frac12$). Denote by $\Sigma= Graph (u)$. If $S$ is
a properly immersed CMC $H$ surface contained in $D\times\mathbb{R}$ and above $\Sigma$,
then $S$ is a vertical translate of $\Sigma$.
\end{theoremmain}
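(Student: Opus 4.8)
The plan is to follow the one-barrier philosophy announced in the introduction: rather than producing an \textit{a priori} family of barriers, we construct a single barrier adapted to the given surface $S$ and use it together with the maximum principle to force $S$ down onto a vertical translate of $\Sigma$. First I would set up the comparison framework. Since $S$ lies above $\Sigma=\mathrm{Graph}(u)$ in $D\times\mathbb{R}$ and $S$ is properly immersed with constant mean curvature $H$ (oriented by the mean curvature vector pointing, as for $\Sigma$, upward), I want to slide $\Sigma$ vertically upward and ask how far it can go before touching $S$. Define $t^\ast=\sup\{t\ge 0 : \mathrm{Graph}(u+s)\text{ lies below }S\text{ for all }0\le s\le t\}$; the goal is to show $t^\ast=0$ is impossible unless $S$ is itself a translate, and more precisely to show that if $S$ is not a translate of $\Sigma$ then one can push a barrier strictly above $\Sigma$ and still stay below $S$, contradicting properness/the definition. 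The delicate point is that $\Sigma$ has infinite boundary values on the arcs $A_i$ (value $+\infty$) and $B_j$ (value $-\infty$), so vertical translates of $\Sigma$ are \emph{not} ordered in the naive sense near the $B_j$, and near the $A_i$ the surface escapes to $+\infty$; hence the translate $\mathrm{Graph}(u+t)$ can never be globally below a fixed $S$ unless we are careful, and the contact analysis must be localized.

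The heart of the argument is the barrier construction via Perron's method, which is the step I expect to be the main obstacle. Given $S$ at bounded-below height over compact subsets of $D$, I would fix a large exhaustion $D_1\subset D_2\subset\cdots$ of $D$ by admissible-type subdomains (truncating the ends inside horodisks at the vertices), and on each $D_n$ solve a Dirichlet problem for $\mathcal{L}v=0$ with boundary data chosen to sit just below $S$ on $\partial D_n$ and to tend to $+\infty$ on the $A_i$-parts and to the $\Sigma$-values (or to $-\infty$) on the $B_j$-parts, so that the solution $v_n$ is a subsolution-type barrier squeezed between $\Sigma$ and $S$. The Perron process requires: (i) existence of local barriers at each boundary point, including the infinite-boundary-value arcs, for which I would use the standard Scherk-type barriers built from pieces of CMC-$H$ graphs over regions bounded by curvature-$2H$ curves (these are exactly the building blocks behind Theorem~\ref{abgail}); (ii) uniform height estimates on compact sets so that $v_n$ converges, after passing to a subsequence, to a solution $v$ of $\mathcal{L}v=0$ on all of $D$ with $\Sigma\le\mathrm{Graph}(v)\le S$; and (iii) a flux/divergence-structure argument, in the spirit of Collin--Rosenberg and Folha--Melo, to show that the limit $v$ actually realizes the correct infinite boundary data on every $A_i$ and $B_j$, i.e. that no end is "lost." Controlling the flux of $\frac{\nabla v}{W}$ along the arcs $\tilde A_i,\tilde B_j$ and inscribed polygons, exactly as in the Folha--Melo existence theorem, is what makes $v$ a genuine Scherk type solution rather than merely a CMC-$H$ graph over a smaller domain.

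Once the barrier $v$ exists with $\Sigma\le\mathrm{Graph}(v)\le S$ and $v$ a Scherk type solution, I would invoke a uniqueness statement for the CMC-$H$ equation with these boundary conditions — this is the "uniqueness result for the constant mean curvature equation" promised at the end of the introduction — to conclude $v=u+c$ for some constant $c\ge 0$; the proof of that uniqueness is again a flux comparison: if $v-u$ is a bounded (hence by the strong maximum principle either signed or constant) solution of the linearized divergence equation, integrating $\mathrm{div}\big(\frac{\nabla v}{W_v}-\frac{\nabla u}{W_u}\big)=0$ against $v-u$ over truncated domains and controlling the boundary terms near the vertices forces $\nabla v=\nabla u$, so $v=u+c$. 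Thus $\mathrm{Graph}(u+c)$ lies below $S$. Finally, I would run the sliding argument: let $c^\ast$ be the largest $c$ with $\mathrm{Graph}(u+c)\le S$; if the graphs are not equal there is an interior or asymptotic contact point, and the maximum principle (interior, at a finite contact point) or a boundary maximum principle together with the barrier behavior at the ends (at an asymptotic contact along an $A_i$ or $B_j$, using that both $S$ and the translate have the same curvature-$2H$ asymptotics) yields $S=\mathrm{Graph}(u+c^\ast)$, i.e. $S$ is a vertical translate of $\Sigma$. The main obstacle throughout is step (iii): ensuring the Perron limit keeps the infinite boundary values, since this is where the geometry of the Scherk type domain (the $2H$-polygon conditions, the horodisk normalization, and the inscribed-polygon inequalities of Theorem~\ref{abgail}) must be used in an essential way rather than soft PDE arguments.
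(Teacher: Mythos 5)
Your overall philosophy (a single barrier adapted to $S$, built by Perron's method, plus a flux-type uniqueness statement) matches the paper's, but the barrier you propose is not the one the paper builds, and the step you yourself flag as ``the main obstacle'' --- keeping the infinite boundary values on the $A_i$ and $B_j$ through the Perron/exhaustion limit --- is left unresolved. It is precisely the step the paper's construction is designed to avoid. The paper does not construct a Scherk type solution trapped between $\Sigma$ and $S$. Instead it fixes a small disk $B_y$ with $\overline{B_y}\subset D$, sets $g(p)=\inf\{t:(p,t)\in S\}$, chooses $\epsilon'>0$ with $g>u+\epsilon'$ on $\partial B_y$, and runs Perron on $D\setminus\overline{B_y}$ over the class of subsolutions bounded above by $\min(u+\epsilon',g)$, with inner boundary value $u+\epsilon'$ on $\partial B_y$ (continuity there is obtained by squeezing between $\max(u,v)$ and $\min(u+\epsilon',g)$, where $v$ is the small annular barrier produced by the implicit function theorem in Lemma~\ref{lemma1}; the constraint $\le g$ is propagated using an interior tangency argument between the graph of the lifted Perron solution and $S$). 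Because the resulting $\beta$ satisfies $u\le\beta\le u+\epsilon'$ globally, its asymptotic behaviour on the $A_i$ and $B_j$ is inherited from $u$ for free, so no end can be ``lost''. The uniqueness step (Proposition~\ref{prop2}, a Stokes/flux argument using the Folha--Melo estimates and the Collin--Krust lemmas) then forces $\beta=u+\epsilon'$, hence $g\ge u+\epsilon'$ on $D\setminus B_y$; this height estimate is what drives the whole proof. Your version would require proving existence of a genuine Scherk solution squeezed below $S$, a much harder statement that is not needed.

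The second gap is in your endgame. You slide to the largest $c^*$ with $\mathrm{Graph}(u+c^*)$ below $S$ and then invoke the maximum principle at an ``interior or asymptotic contact point''; but if the contact is only asymptotic, i.e. $\inf(g-u-c^*)=0$ is not attained, there is no contact point and no maximum principle available at the ideal boundary, so this case is not handled by what you wrote. The paper resolves exactly this dichotomy with its height estimate: if $g>u$ and $\inf(g-u)=0$, then $\beta=u+\epsilon'\le g$ outside $B_y$, and together with lower semicontinuity of $g-u>0$ on the compact set $\overline{B_y}$ this contradicts $\inf(g-u)=0$. Hence $\alpha=\inf(g-u)$ is either positive or attained, and after translating $u$ by $\alpha$ one lands in the attained case, where an honest interior tangency and the usual maximum principle give that $S$ coincides with the translated graph. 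You should restructure your argument around such a height estimate rather than around a first-contact sliding argument.
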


The proof of the theorem consists in constructing barriers to control the
surface $S$. Before starting the proof, let us give some notations and preliminary results
that we will use.

So we fix a value of $H\in(0,\frac12)$, a Scherk type domain $D$ and a Scherk type solution $u$. Let $y\in D$ and $B_y$ and $B'_y$
be open balls centered in $y$ such that $B_y\subsetneqq B'_y\subsetneqq D$ (see
Figure~\ref{fig}). The following result consists in constructing a first barrier to
control $S$.

\begin{figure}[h]
\begin{center}
\resizebox{0.5\linewidth}{!}{\input{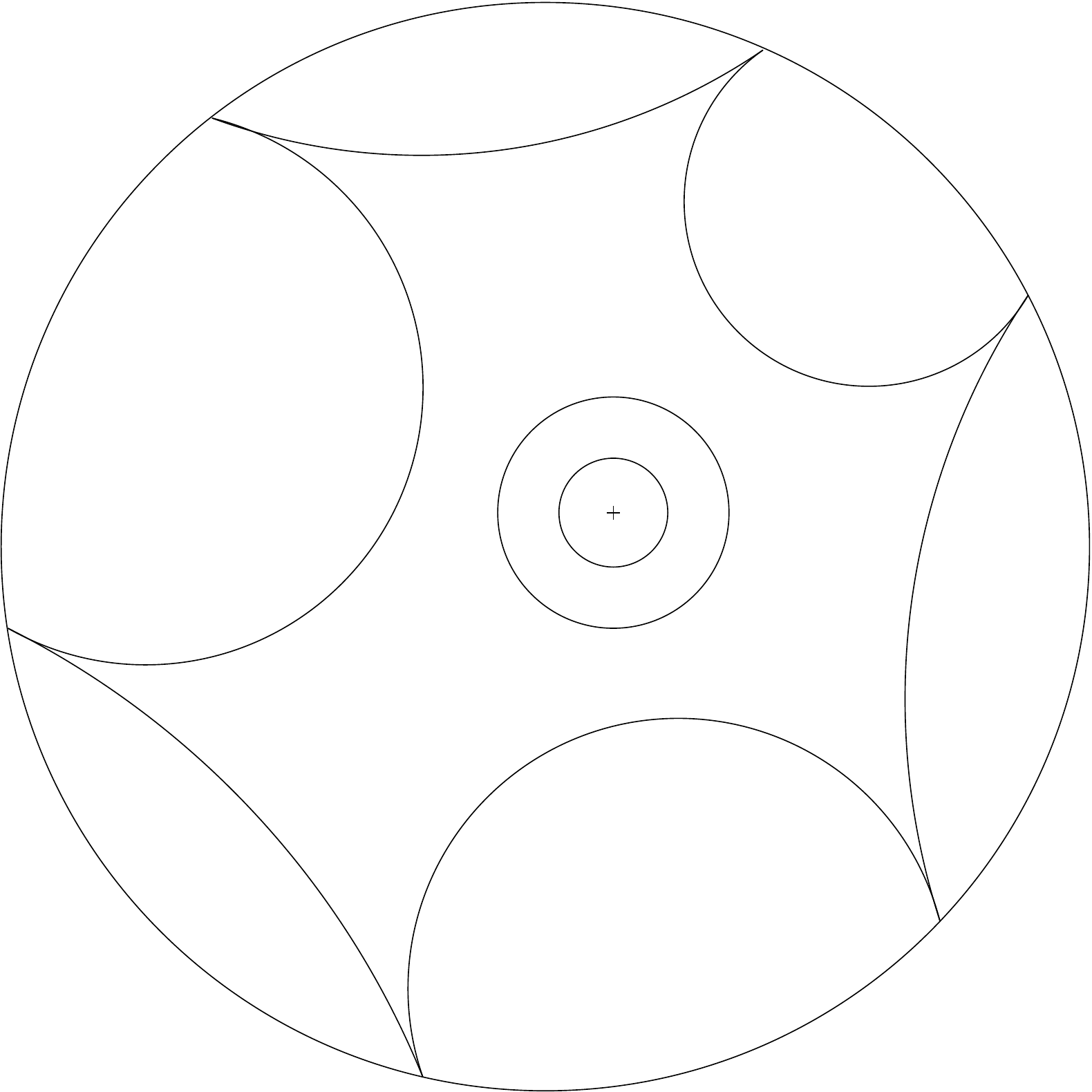_t}}
\caption{The Scherk type domain $D$ and the balls $B_y$ and $B_y'$
\label{fig}}
\end{center}
\end{figure}

\begin{lemma}\label{lemma1} There exists a constant $\epsilon > 0$ such that for
all $t\in [0,\epsilon)$ there exists $v\in C^{2}(B'_y\setminus B_y)\cap
C^0(\partial(B_y'\setminus B_y))$ such that $v$ solves \eqref{linear} and $v=u$ in
$\partial B_y'$ and $v=u+t$ over $\partial B_y$.
\end{lemma}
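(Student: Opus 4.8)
The plan is to solve the Dirichlet problem for $\mathcal{L}v=0$ on the annular region $\Omega_y := B'_y\setminus B_y$ with boundary data $u$ on $\partial B'_y$ and $u+t$ on $\partial B_y$, using the Perron method. First I would fix notation: since $u$ is a fixed Scherk type solution, it is a genuine smooth function on a neighborhood of $\overline{\Omega_y}\subset D$ (recall $B_y\subsetneqq B'_y\subsetneqq D$), so the boundary data are continuous (indeed smooth). The main structural point is that the CMC $H$ operator $\mathcal{L}$ admits a Perron process exactly as in the minimal case: one works with the class of subsolutions (functions $w$ with $\mathcal{L}w\geq 0$, i.e.\ $\operatorname{div}(\nabla w/W_w)\geq 2H$) lying below a fixed supersolution, takes the upper envelope, and shows by the usual interior-gradient-estimate plus harmonic-replacement (here ``$\mathcal{L}$-replacement'' on small balls, which is solvable by the classical existence theory for the CMC equation with small geometry since $H<\tfrac12$) that the envelope is a $C^2$ solution. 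The only nonclassical ingredient is the existence of barriers at \emph{every} boundary point of $\Omega_y$, which is where the hypothesis $0<H<\tfrac12$ and the smallness of $t$ enter.

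The key step is the construction of these local barriers, and this is where I expect the main obstacle to lie. On $\partial B'_y$ the barrier is trivial: $u$ itself is a solution agreeing with the data there, so it serves simultaneously as an upper and lower barrier near $\partial B'_y$ (this also gives the global supersolution $u+t$ needed to start Perron, after checking $u+t$ is still a supersolution — it is, since $\mathcal{L}$ is translation-invariant in the $\mathbb{R}$ factor, so $\mathcal{L}(u+t)=0$). On $\partial B_y$ we must produce, for each boundary point, a local supersolution $\geq u+t$ with the right boundary value. Here I would use that $\partial B_y$ is a round circle in $\mathbb{H}^2$, hence has constant geodesic curvature $\kappa>1>2H$ with respect to the \emph{inward} normal of $\Omega_y$ (the normal pointing into the annulus, i.e.\ away from the center $y$); strict positivity of $\kappa-2H$ is precisely what lets one build a rotationally symmetric CMC-$H$ barrier — essentially a piece of an unduloid-type graph over a collar of $\partial B_y$ — sitting just above the plane $\{z=u+t\}$ level and turning up fast enough. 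The constant $\epsilon>0$ appears because this collar construction requires the barrier to dominate $u$ (not a constant) on the outer edge of the collar, and the available ``room'' $u|_{\text{collar}} - (u+t)|_{\partial B_y} \geq (\min_{\overline{\Omega_y}}u) - (\max_{\partial B_y}u) - t$ stays positive only for $t$ below some threshold $\epsilon$ depending on $u$, $B_y$, $B'_y$, $H$.

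Having barriers at all boundary points and a global supersolution, the Perron method yields a solution $v\in C^2(\Omega_y)\cap C^0(\overline{\Omega_y})$ with $v=u$ on $\partial B'_y$ and $v=u+t$ on $\partial B_y$; the continuity up to the boundary is the standard consequence of the two-sided barriers. This gives exactly the function claimed in the lemma. I would remark that one does not need uniqueness here (that is addressed separately later in the section), only existence, which simplifies the Perron argument — in particular the comparison/ordering of sub- and supersolutions used inside Perron only requires the comparison principle for $\mathcal{L}$ on the bounded domain $\Omega_y$, which holds because $\mathcal{L}$ is a quasilinear elliptic operator of mean-curvature type. The single genuinely delicate verification is the quantitative collar-barrier estimate at $\partial B_y$; everything else is routine adaptation of the Hoffman–Meeks/Perron machinery to the CMC setting with $H<\tfrac12$.
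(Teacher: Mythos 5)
Your strategy (Perron's method with local barriers on the annulus $B'_y\setminus B_y$) is genuinely different from the paper's. The paper does not construct barriers at all: it sets $F(v,\phi)=(\mathcal{L}v,\,v-\phi)$ on H\"older spaces over $\overline{B'_y\setminus B_y}$, observes $F(u,u)=0$, checks that the linearization $D_1F(u,u)$ is a uniformly elliptic linear operator in divergence form whose Dirichlet problem is uniquely solvable (Theorem 6.14 of Gilbarg--Trudinger), and applies the implicit function theorem to get solutions for all boundary data near $u$ --- in particular $u$ on $\partial B'_y$ and $u+t$ on $\partial B_y$ for $t$ small. The smallness of $t$ is exactly the neighborhood of validity of the implicit function theorem, nothing more.

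There is a genuine error in your argument, located precisely at the step you identify as the crux. A geodesic circle of radius $r$ in $\mathbb{H}^2$ has curvature $\coth r$ with respect to the normal pointing \emph{toward} its center; with respect to the inward normal of the annulus $B'_y\setminus\overline{B_y}$ along $\partial B_y$, which points \emph{away} from $y$, the curvature is $-\coth r<-1<2H$. So the inner boundary is concave for your domain, the Serrin-type condition $\kappa\geq 2H$ fails there, and the justification you give for the collar barrier (``strict positivity of $\kappa-2H$'') is based on a false premise. This is not cosmetic: if $\kappa\geq 2H$ held on all of $\partial(B'_y\setminus B_y)$, the Dirichlet problem would be solvable for \emph{arbitrary} continuous data (Hauswirth--Rosenberg--Spruck) and no restriction on $t$ would be needed; the restriction to small $t$ is a symptom of exactly this failure. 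Concretely, the missing object is the \emph{lower} barrier at $p\in\partial B_y$: a subsolution climbing from below $u$ up to $u(p)+t$ as one approaches a concave boundary component. For a radial profile $\phi(d)$, $d=\mathrm{dist}(\cdot,\partial B_y)$, one gets
\[
\mathrm{div}\Bigl(\tfrac{\phi'\nabla d}{\sqrt{1+\phi'^2}}\Bigr)=\frac{\phi''}{(1+\phi'^2)^{3/2}}+\frac{\phi'}{\sqrt{1+\phi'^2}}\coth(r+d),
\]
and since $\phi'<0$ the curvature term works \emph{against} the subsolution inequality; one is forced into a quantitative analysis of rotational CMC $H$ annuli whose achievable height gap over a fixed collar is bounded --- that bound, not the oscillation of $u$ as you claim, is the true source of $\epsilon$ in a barrier proof. (A smaller issue: $u$ is a valid lower barrier at $\partial B'_y$ but not an upper one, since $u<u+t$ on $\partial B_y$; this is harmless because $\partial B'_y$ is convex for the annulus and admits standard upper barriers.) Your plan could probably be salvaged with a correct rotational-barrier computation, but as written the key step fails; the paper's perturbative argument sidesteps the whole difficulty.
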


\begin{proof}
Consider the operator $F:C^{2,\alpha}(\overline{B'_{y}\backslash
B_y})\times C^{2,\alpha}(\partial (B'_{y}\backslash B_y))\longrightarrow
C^{0,\alpha}(\overline{B'_{y}\backslash B_y})\times C^{2,\alpha}(\partial
(B'_{y}\backslash B_y))$ given by

$$F(v,\phi)=(\mathcal{L}v,v-\phi).$$

Observe that 
$$
F(u,u)=0.
$$

Moreover, consider the operator

\begin{align*}
T:= D_1 F(u,u): C^{2,\alpha}(\overline{B'_{y}\setminus B_y})\longrightarrow&
C^{0,\alpha}(\overline{B'_{y}\setminus B_y})\times
C^{2,\alpha}(\partial(B'_{y}\setminus B_y))\\
h\longmapsto& \lim_{t\longrightarrow 0}\frac{F(u+th,u)-F(u,u)}{t}
\end{align*}

We have that
$$
T(h)=\left(Div\left(\frac{\nabla h-\frac{\nabla u}{W}\langle\frac{\nabla
u}{W},\nabla h\rangle}{W}\right), h\right).
$$

Observe that $T$ is a linear operator, of the form $T=(T_1,T_2)$ where $T_1$ is
an elliptic operator of the form 
$$
T_1(v)=a^{ij}(x)D_{ij}v+b^i(x)D_i v;\quad a^{ij}=a^{ji}.
$$

Moreover, since $|\nabla u|\leq C$, we have that $\frac{|\nabla u|}{W}\leq
C'<1$, this implies that $T_1$ is uniformly elliptic. We also have that the
coefficients of $T_1$ belong to $C^{0,\alpha}(\overline{B'_y\setminus B_y})$. It
follows by Theorem 6.14 in \cite{GT} that if $g\in
C^{0,\alpha}(\overline{B'_y\setminus B_y})$ and $\phi \in C^{2,\alpha}(\partial
(B'_y\setminus B_y))$, then there exists a unique $w\in
C^{2,\alpha}(\overline{B'_y\setminus B_y})$ such that $T_1(w)=g$ in
$B'_y\setminus B_y$ and $w=\phi$ in $\partial (B'_y\setminus B_y)$. Then, $T_1$
is invertible.

We conclude that $T$ is invertible. It follows by the implicit function theorem
that for all $\phi$ close to $u$ there exists a solution of $\mathcal{L}v=0$ in
$B_y'\setminus \overline{B_y}$ with $v=\phi$ in $\partial(B_y'\setminus B_y)$.
In other words, it exists $\epsilon > 0$ such that for all $t\in [0,\epsilon)$
there exists $v$ such that $v$ solves \eqref{linear} and $v=u$ over $\partial
B_y'$ and $v=u+t$ over $\partial B_y$.
\end{proof}

Let $S$ be as in Theorem \ref{main}. Give $p\in D$, define $g(p)$ by
$$g(p)=\inf_{t\in\mathbb{R}}\{(p,t)\in S\}\in\mathbb{R}\cup\{+\infty\}.$$

Observe that $g$ is a lower semicontinuous functions and $g\geq u$. From now
on, we will assume that $g>u$ (the case where $g(p)=u(p)$ for a point $p\in
D$ will be considered in the proof of the theorem). Then
for $\epsilon'>0$ sufficiently small, we have that
\begin{equation}
\label{condg}g>u+\epsilon' \quad\textrm{ on }\partial B_y.
\end{equation}

Now, let $\epsilon$ be as in Lemma \ref{lemma1}, fix
$\epsilon'<\epsilon$ where $\epsilon'$ satisfies \eqref{condg} and $v$ given by
Lemma~\ref{lemma1} associated to $\epsilon'$. We will
construct a second barrier to control the surface $S$. More precisely, we will
prove the existence of a function $\beta$ that satisfies

\begin{align}
\label{eq1}
\mathcal{L}\beta &= 0 \quad\quad\textrm{ in } D\backslash \overline{B_y},\\
\label{eq2}
\beta&=  u +\epsilon' \quad\textrm{ in } \partial B_y.
\end{align}

\begin{proposition}\label{prop1} There is a solution $\beta\in C^2(D\setminus
\overline{B_y})\cap C^0(D\setminus B_y)$ for the Dirichlet problem
\eqref{eq1}-\eqref{eq2} such that $\max (u,v)\leq \beta\leq \min (u+\epsilon',
g)$.
\end{proposition}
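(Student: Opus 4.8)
The plan is to use the Perron method over the unbounded domain $D\setminus\overline{B_y}$ with the prescribed boundary value $u+\epsilon'$ on $\partial B_y$ and, morally, $+\infty$ on the arcs $A_i$ and $-\infty$ on the arcs $B_j$ (inherited from $u$). First I would set up the family of subsolutions: let $\mathcal{S}$ be the set of functions $w\in C^0(D\setminus\overline{B_y})$ that are subsolutions of \eqref{linear} (in the viscosity/comparison sense, i.e.\ $\mathcal{L}w\ge 0$), satisfy $w\le u+\epsilon'$ on $\partial B_y$, and lie below $\min(u+\epsilon',g)$ on all of $D\setminus\overline{B_y}$. The function $v$ from Lemma~\ref{lemma1} (extended suitably, or rather $\max(u,v)$ restricted to the annulus and continued by $u$ outside $B_y'$) should belong to $\mathcal{S}$ after checking that $\max(u,v)$ is a subsolution — this uses that the maximum of two subsolutions is a subsolution — and that it respects the two-sided bound; this gives $\mathcal{S}\neq\emptyset$. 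Since $g$ is lower semicontinuous with $g>u$ and $g>u+\epsilon'$ on $\partial B_y$ by \eqref{condg}, and since $u$ itself is a solution attaining $\pm\infty$ on the arcs, every $w\in\mathcal{S}$ is bounded above by $\min(u+\epsilon',g)$ by construction, so the Perron envelope
$$
\beta(x):=\sup_{w\in\mathcal{S}} w(x)
$$
is well-defined and satisfies $\max(u,v)\le\beta\le\min(u+\epsilon',g)$.

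Next I would run the standard Perron machinery to show $\beta$ is a solution of \eqref{eq1}. The key local tool is solvability of the Dirichlet problem for $\mathcal{L}u=0$ on small balls (geodesic disks) with continuous boundary data, which follows because $\mathcal{L}$ is a quasilinear elliptic operator of mean-curvature type with $|H|<\frac12$; the relevant existence and a priori gradient estimates in $\mathbb{H}^2\times\mathbb{R}$ for the $H$-graph equation are classical (e.g.\ via the results underlying \cite{FolhaMelo}, or Schauder theory as in \cite{GT} applied on balls where one already has a local barrier from $u$). Using these "harmonic replacements" on small disks, the usual two-step argument — (i) the lift of a subsolution on a small disk stays in $\mathcal{S}$, hence $\beta$ equals a local solution near any interior point, and (ii) a bounded sequence of solutions converges in $C^2_{loc}$ by interior estimates — shows $\beta\in C^2(D\setminus\overline{B_y})$ and $\mathcal{L}\beta=0$. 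The inequality $\max(u,v)\le\beta$ is automatic since $\max(u,v)\in\mathcal S$; the upper bound $\beta\le\min(u+\epsilon',g)$ passes to the supremum.

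The remaining point is the boundary behaviour \eqref{eq2}: $\beta=u+\epsilon'$ continuously on $\partial B_y$. For this I would construct local barriers at each point $p\in\partial B_y$. An upper barrier is immediate: $\min(u+\epsilon',g)$ is a supersolution (minimum of a solution and a lower-semicontinuous function dominating $u$; one should phrase this carefully using that $g$ is an infimum of heights on the properly immersed CMC surface $S$, hence behaves like a supersolution in the viscosity sense) equal to $u+\epsilon'$ near $p$, since $g>u+\epsilon'$ on $\partial B_y$ by \eqref{condg}. A lower barrier is $\max(u,v)$, which equals $u+\epsilon'$ on $\partial B_y$ because $v=u+\epsilon'$ there and $v\ge u$ near $\partial B_y$ (again from Lemma~\ref{lemma1}, as $v\ge u$ on the inner boundary and $v=u$ on $\partial B_y'$, so $\max(u,v)=v$ near $\partial B_y$). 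Squeezing $\beta$ between these two barriers forces continuity and the correct value on $\partial B_y$, and also shows $\beta\in C^0(D\setminus B_y)$.

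I expect the main obstacle to be the correct treatment of the "boundary at infinity": one must ensure that the Perron envelope does not fail to be a solution or blow up incorrectly near the arcs $A_i$ (where $u\to+\infty$) and $B_j$ (where $u\to-\infty$). Near $A_i$ the upper bound $\beta\le u+\epsilon'$ is harmless since $u+\epsilon'$ still $\to+\infty$, and near $B_j$ the lower bound $\beta\ge u$ forces $\beta\to-\infty$ matching $u$; the delicate part is verifying that these asymptotics, together with the structure of Scherk type domains from \cite{FolhaMelo}, prevent any loss of the solution property and guarantee $\beta$ is genuinely a $C^2$ solution on the open set $D\setminus\overline{B_y}$ with no spurious singular set. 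Handling $g$ as an admissible "obstacle" in the Perron process — justifying that $\min(u+\epsilon',g)$ serves as a supersolution barrier despite $g$ being merely lower semicontinuous — is the second technical hurdle, resolved by the properness of $S$ and the maximum principle comparing $S$ with the graphs of elements of $\mathcal{S}$.
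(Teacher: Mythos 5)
Your plan follows essentially the same route as the paper: the Perron method with the family of subsolutions bounded above by $\min(u+\epsilon',g)$, local replacement on small geodesic disks (solvable by the Hauswirth--Rosenberg--Spruck existence theorem), and the squeeze between $\max(u,v)$ and $\min(u+\epsilon',g)$ to get continuity and the value $u+\epsilon'$ on $\partial B_y$. The one delicate point you flag --- that the replacement must stay below the merely lower semicontinuous obstacle $g$ --- is indeed the crux, and the paper resolves it exactly as you indicate, by applying the tangency maximum principle between $S$ and the graph of the local replacement (a genuine CMC $H$ graph, vertically translated until it touches $S$ from below), rather than with arbitrary subsolutions.
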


\begin{proof} To prove this proposition we will use the Perron method. Let us
recall the framework of this method. A function $w\in C^0(D\setminus B_y)$ is
called a \textit{subsolution} for $\mathcal{L}$ if for any compact subdomain
$U\subset D\backslash B_y$ and any solution $h$ of \eqref{linear} with $w\leq h$
on the boundary of $U$, we have $w\leq h$ on U.

First, observe that $u$ is a subsolution for \eqref{linear}. Moreover, if $w'$ and
$w$ are subsolutions, the continuous function $\max (w',w)$ is also a
subsolution.

Let $\Delta\subset D\setminus B_y$ be a geodesic disk of small radius such that
$\kappa(\partial \Delta)\geq 2H$. Theorem 3.2 in \cite{HausRosenSpru2} implies
that the Dirichlet problem for Equation \eqref{linear} can be solved in $\Delta$.
So, for any such disk $\Delta$ and subsolution $w$, we can define a continuous
function $M_{\Delta}(w)$ as

$$
M_{\Delta}(w)(x)=\begin{cases}
w(x) \quad\textrm{if } x\in D\setminus\Delta \\
\nu(x) \quad\textrm{if } x\in \Delta 
\end{cases}
$$
where $\nu$ is the solution of $\mathcal{L}\nu = 0$ in $\Delta$, with $\nu=w$ in
$\partial\Delta$.

Also, define $u_+=\min (u+\epsilon', g)$ and $u_-=\max(u,v)$. Denote by $\Gamma$
the set of all subsolutions $w$ such that $w\leq u_+$ on $D\setminus B_y$,

\begin{claim}\label{claim1}
If $w\in \Gamma$,then $M_{\Delta}(w)\in \Gamma$.
\end{claim}
\begin{proof}[Proof of Claim~\ref{claim1}] First we have to prove that
$M_{\Delta}(w)$ is a
subsolution. So, take a arbitrary compact subdomain $U\subset D\setminus B_y$,
and let $h$ a solution of \eqref{linear} in $U$ with $M_{\Delta}(w)\leq h$ on
$\partial U$. Since $w=M_{\Delta}(w)$ in $U\setminus \Delta$ we have that
$M_{\Delta}(w)\leq h$ in $U\setminus\Delta$.

Moreover, $M_{\Delta}(w)$ is a solution of \eqref{linear} in $\Delta$. Then, by
the maximum principle, we have that $M_{\Delta}(w)\leq h$ in $U\cap \Delta$. So,
$M_{\Delta}(w)\leq h$ in $U$. Since $U$ is arbitrary, it follows that
$M_{\Delta}(w)$ is a subsolution in $D\setminus B_y$.

Now we have to prove that $M_{\Delta}(w)\leq u_+$. Observe that in
$D\setminus\Delta$, $M_{\Delta}(w)=w$, since $w\in\Gamma$, then $w\leq u_+$, and
so, $M_{\Delta}(w)\leq u_+$ in $D\setminus\Delta$.

On the other hand, $M_{\Delta}(w)=\nu$ in $\Delta$, where $\nu$ is a solution of
$\mathcal{L}(\nu)=0$ in $\Delta$ and $\nu=w$ on $\partial\Delta$. Thus $\nu\leq
u_+=\min(u+\epsilon',g)$ on $\partial\Delta$. It follows by the maximum
principle that $\nu\leq u+\epsilon'$ in $\Delta$. So, we have to prove that
$\nu\leq g$ in $\Delta$.

Suppose that there exists $q\in\Delta$ such that $(\nu-g)(q)<0$. Then, there
exists $p\in\Delta$ such that $(\nu-g)(p)=\min(\nu-g)=C<0$.

Now, observe that the graph of $g$ in $\Delta$ is a piece of the surface $S$,
let us denote it by $S_g$. Since $g\geq \nu+C$, then the graph $\Sigma_{\nu+C}$
of $\nu+C$ is a CMC $2H$ surface which is bellow the surface $S_g$. Moreover,
$(p,g(p))$ is a point of contact of $\Sigma_{\nu+C}$ and $S_g$, and by the
maximum principle, $S_g=\Sigma_{\nu+C}$. It follows that $g=\nu+C$ in $\Delta$,
since $\nu\leq g$ on $\partial\Delta$ then $C\geq0$, and this contradicts $C<0$.
Then $\nu\leq g$ in $\Delta$.
\end{proof}

We define our solution by the following formula

$$\beta(q)=\sup_{w\in \Gamma} w(q).$$

Observe that $u_-$ is a subsolution, since $u$ and $v$ are subsolutions. Also,
$u\leq u_+=\min(u+\epsilon',g)$. Moreover, in the proof of Claim \ref{claim1} we
see that $v\leq u_+$. Then $u_-=\max(u,v)\leq u_+$. We conclude that $u_-\in
\Gamma$, then $\Gamma$ is non empty, and $u_+$ is an upper bound for any $w$ in
$\Gamma$, thus $\beta$ is well defined.

The method of Perron claims that $\beta$ is a solution of Equation~\eqref{linear}.

\begin{claim}\label{claim2}
The function $\beta$ is a solution of \eqref{linear} in $D\setminus \overline{B_y}$.
\end{claim}

\begin{proof}[Proof of Claim~\ref{claim2}]
 Let $p\in D\setminus B_y$ and $\Delta\subset D\setminus
B_y$ a geodesic disk of small radius centered at $p$ as above. By definition of
$\beta$ there exists a sequence of subsolutions $(w_n)$ such that
$w_n(p)\longrightarrow \beta(p)$. Then, consider the sequence of subsolutions
$M_{\Delta}(w_n)$, we have that $M_{\Delta}(w_n)(p)\longrightarrow \beta(p)$.
Also, we have that $M_{\Delta}(w_n)$ is a bounded sequence of solutions of \eqref{linear}
in $\Delta$, so, by considering a subsequence if necessary, we can assume that
it converges to a solution $\overline{w}$ on $\Delta$ with $\beta\geq
\overline{w}$ and $\overline{w}(p)=\beta(p)$. Let us prove that
$\beta=\overline{w}$ on $\Delta$, then $\beta$ will be a solution of
\eqref{linear}. 

We have that $\beta\geq \overline{w}$. Suppose that there is a
point $q\in \Delta$ where $\beta(q)>\overline{w}(q)$. So, there is a subsolution
$s$ such that $s(q)>\overline{w}(q)$. Now consider the sequence of subsolutions
$M_{\Delta}(\max (s,w_n))$. We have that $M_{\Delta}(\max (s,w_n))$ is a
sequence of solutions of \eqref{linear} in $\Delta$. Thus, considering a
subsequence, it converges to a solution $\overline{s}\ge \overline{w}$ of \eqref{linear}
in
$\Delta$.

So, we have $\overline{w}$ and $\overline{s}$ solutions of \eqref{linear} in
$\Delta$, with $\overline{w}(p)=\beta(p)=\overline{s}(p)$, thus by the maximum
principle we have that $\overline{w}=\overline{s}$ in $\Delta$.

But, since $M_{\Delta}(\max (s,w_n))\geq s$, we have that $\overline{s}\geq s$.
This implies that $\overline{s}(q)\geq s(q)>\overline{w}(q)$, which contradicts
$\overline{w}=\overline{s}$ in $\Delta$.
\end{proof}

Until now we have a function $\beta\in C^2(D\setminus\overline{B_y})$ solution
of \eqref{linear} in $D\setminus \overline{B_y}$ such that $u\leq \beta\leq \min
(u+\epsilon', g)$. But we don't have any information about $\beta$ on the boundary
$\partial B_y$. So, the next step is prove that $\beta$ is continuous up to
$\partial B_y$ and $\beta=u+\epsilon'$ in $\partial B_y$.

\begin{claim}\label{claim3}
The function $\beta$ is continuous up to the boundary $\partial B_y$ and takes
the value $u+\epsilon'$ on it.
\end{claim}

\begin{proof}[Proof of Claim~\ref{claim3}] We have that
$\beta(q)=\sup_{w\in\Gamma}w(q)$, then,
$\beta(q)\leq u_{+}(q)\leq u+\epsilon'$. On the other hand,
$u_-(q)=\max(u,v)\in\Gamma$, then $u_-(q)\leq\beta(q)$. Moreover, in $\partial
B_y$ we have that $u_-(q)=u+\epsilon'$. Thus, let $p\in\partial B_y$, and
$\{x_n\}$ a sequence such that $x_n\longrightarrow p$. We have $$u_-(x_n)\leq
\beta(x_n)\leq u_+(x_n),$$ since $$\lim_{x_n\longrightarrow p}
u_-(x_n)=\lim_{x_n\longrightarrow p} u_+(x_n)=u(p)+\epsilon',$$ we have that
$$\lim_{x_n\longrightarrow p} \beta(x_n)=u(p)+\epsilon'.$$

Then we define $\beta(p)$ to be continuous on $D\setminus B_y$ and
$\beta=u+\epsilon'$ in $\partial B_y$.
\end{proof}
This concludes the proof of Proposition~\ref{prop1}.\end{proof}

We have proved the existence of a function $\beta$ defined on $D\setminus
B_y$ such that $\max (u,v)\leq\beta\leq \min (u+\epsilon',g)$ and $\beta\in
C^2(D\setminus\overline{B_y})\cap C^0(D\setminus B_y)$. Now we observe that,
since $v\leq\beta\leq u+\epsilon'$ we have by Theorem 1.1 in \cite{Spruck} that
$\nabla\beta$ is uniformly bounded close to $\partial B_y$. This implies that $\beta$ is a
solution of a
linear uniformly elliptic equation. It follows by Corollary 8.36 in \cite{GT}
that $\beta\in C^{1,\alpha}(D\setminus \overline{B_y})$, and then $\beta\in
C^1(D\setminus \overline{B_y})$.

In the next result we will prove a uniqueness result for CMC graphs in
$\mathbb{H}^2\times\mathbb{R}$ defined over unbounded domain in $\mathbb{H}^2$
whose the existence was proved by A. Folha and S. Melo in \cite{FolhaMelo}.

\begin{proposition}\label{prop2} Let $\beta\in C^1(D\setminus \overline{B_y})$
solution of the Dirichlet problem (\ref{eq1})-(\ref{eq2}) such that
$u\leq\beta\leq u+\epsilon'$. Then, $\beta=u+\epsilon'$.
\end{proposition}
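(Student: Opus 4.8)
The plan is to show that the difference $w := \beta - u$, which by hypothesis satisfies $0 \le w \le \epsilon'$ on $D \setminus \overline{B_y}$ and $w = \epsilon'$ on $\partial B_y$, must be identically $\epsilon'$. The natural strategy is a maximum-principle / exhaustion argument: both $u$ and $\beta$ solve the same quasilinear CMC equation $\mathcal{L}(\cdot) = 0$, and $u + \epsilon'$ also solves it (since $\mathcal{L}$ is invariant under vertical translation, as $\mathcal{L}(u) = \operatorname{div}(\nabla u / W) - 2H$ depends only on $\nabla u$). So $\beta$ is trapped between the two CMC-$H$ graphs $\Sigma = \operatorname{Graph}(u)$ and $\operatorname{Graph}(u + \epsilon')$, which are disjoint vertical translates. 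I want to propagate the equality $\beta = u + \epsilon'$ from $\partial B_y$ out to all of $D$.

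The key step is to set up a comparison on the exhaustion of $D$ by admissible inscribed polygons (or by the Scherk type solution's own sublevel behaviour near the arcs $A_i$, $B_j$). The idea: suppose $w \not\equiv \epsilon'$. Since $w$ attains its maximum value $\epsilon'$ on $\partial B_y$, and $w$ cannot attain an interior maximum equal to $\epsilon'$ unless it is constant (by the strong maximum principle applied to the uniformly elliptic linear equation satisfied by the difference $w$ — the coefficients are controlled because $|\nabla u|$, $|\nabla \beta|$ are bounded, the latter via Theorem 1.1 in \cite{Spruck} near $\partial B_y$ and interior gradient estimates elsewhere), we must have $w < \epsilon'$ in the interior. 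Then $\sup_{D \setminus \overline{B_y}} w = \epsilon'$ is not attained in the interior, so it must be "attained at infinity", i.e.\ along the ideal boundary or along the arcs $A_i$, $B_j$. On the arcs $B_j$, both $u$ and $\beta$ tend to $-\infty$; on the arcs $A_i$, both tend to $+\infty$. The hard part is to control $w = \beta - u$ in these limits and show it cannot approach $\epsilon'$ there.

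For this I would use the geometry of the boundary arcs. Near an arc $A_i$ (curvature $2H$), the Scherk type solution $u$ behaves like a known "half-space" CMC solution; because $\beta$ is squeezed between $u$ and $u + \epsilon'$ and solves the same equation with the same $+\infty$ asymptotics along $A_i$, a barrier argument using the translated graphs and the boundary-gradient estimates of Spruck forces $\beta - u \to 0$ there (the two solutions with the same infinite boundary data must agree to first order along $A_i$). The analogous statement holds along the $B_j$. Concretely, one fixes a compact sub-arc $\tilde{A}_i$ and uses the flux / divergence-structure of $\mathcal{L}$: integrating $\operatorname{div}(\nabla\beta/W_\beta - \nabla u/W_u)$ over a region between nested curves, and using that the integrand is a nonnegative combination times $\nabla(\beta - u)$, yields that $\beta - u$ cannot have a strict interior maximum and that its boundary flux along the ideal edges vanishes. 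Hence $\sup w$ is attained, forcing $w \equiv \epsilon'$, i.e.\ $\beta = u + \epsilon'$.

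Thus the main obstacle is the behaviour of $\beta - u$ along the non-compact ideal edges $A_i$ and $B_j$: one must rule out that the supremum $\epsilon'$ escapes to infinity. I expect this to be handled by combining the uniform gradient bound of \cite{Spruck} (valid because $\beta$ is pinched between two translates of $u$), the strong maximum principle for the linear uniformly elliptic equation governing the difference, and a Scherk-type barrier near each arc built from the explicit CMC-$2H$ curves, so that $\beta$ and $u$ share the same first-order asymptotics along $\partial D$ and the comparison closes up.
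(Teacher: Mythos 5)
Your proposal has a genuine gap, and two of its key assertions are actually incorrect. First, the maximum-principle setup does not get off the ground: the function $w=\beta-u$ attains its maximum $\epsilon'$ on $\partial B_y$, which is an honest boundary component of the domain $D\setminus\overline{B_y}$, not an interior point. The strong maximum principle therefore gives no contradiction and no reason why the supremum should ``escape to infinity'' along the ideal edges. Second, your claimed asymptotics --- that a barrier argument forces $\beta-u\to 0$ along the arcs $A_i$ --- cannot be right, because it contradicts the very conclusion of the proposition: $u+\epsilon'$ is itself an admissible $\beta$ (the operator $\mathcal{L}$ is invariant under vertical translation), and $(u+\epsilon')-u\equiv\epsilon'$ does not tend to $0$ along $A_i$. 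Two solutions with the same infinite boundary data on an edge need not agree to first order there; what is true (Theorem 5.1 of \cite{FolhaMelo}) is that the \emph{flux} $\int_{A_i^n}\frac{\nabla w}{W_w}\cdot\eta$ of \emph{any} such solution tends to the length $|A_i^n|$, so these contributions cancel between $u$ and $\beta$ without the functions themselves becoming asymptotic.

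The paper's actual proof is the flux argument you only gesture at in your last paragraph, and the details you omit are precisely where the work lies. One exhausts $D$ by polygons $P^n$ cut off by shrinking horodisks at the vertices, applies Stokes to the divergence-free field $\frac{\nabla u}{W_u}-\frac{\nabla\beta}{W_\beta}$ on $P^n\setminus B_y$, and estimates each boundary term: the contributions over $A_i^n$ and $B_i^n$ cancel up to errors controlled by the horocycle arcs $C_i^n$, whose contribution is at most $2|C_i^n|\to 0$. This leaves $\int_{\partial B_y}\bigl(\frac{\nabla u}{W_u}-\frac{\nabla\beta}{W_\beta}\bigr)\cdot\eta\ge 0$ in the limit. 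On the other hand, since $\beta$ and $u+\epsilon'$ agree on $\partial B_y$ and $\nabla(u+\epsilon')=\nabla u$, the Collin--Krust lemmas (Lemmas 1 and 2 of \cite{collin}) show this flux is \emph{strictly negative} as soon as $\beta<u+\epsilon'$ somewhere; this is the rigorous, quantitative replacement for the Hopf-lemma intuition in your sketch. The contradiction forces $\beta=u+\epsilon'$. Without the Folha--Melo flux identity on the ideal edges and the Collin--Krust strict inequality at $\partial B_y$, your argument does not close.
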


\begin{proof} Let us first analyze the boundary $\partial D$. As in section 2,
let $A_1,B_1,...,A_k,B_k$ be the edges of the $\partial
D$ with $u(A_i)=+\infty=\beta(A_i)$ and $u(B_i)=-\infty= \beta(B_i)$.

For each $i$, let $\mathcal{H}_i(n)$ be a horodisk asymptotic to the vertex
$d_i$ of $D$ such that $\mathcal{H}_i(n+1)\subset \mathcal{H}_i(n)$ and
$\cap_n\mathcal{H}_i(n)=\emptyset$. For each side
$A_i$, let us denote by $A_i^n$ the compact subarc of $A_i$ which is the part of
$A_i$ outside the two horodisks, and by $|A_i^n|$ the length of $A_i^n$.
Analogously, we define $B^n_i$ for each side $B_i$. Denote by $C^n_i$ the
compact arc of $\partial\mathcal{H}_i(n)$ contained in the domain $D$ and let $P^n$
be the subdomain of $D$ bounded by the closed curve formed by the arcs $A^n_i$, $B^n_i$ and
$C^n_i$ and let us denote $\Delta^n=\partial(P^n\setminus \overline{B_y})$.

We have by the theorem of Stokes that
$$
0=\int_{P^n\setminus B_y}Div\left(\frac{\nabla
u}{W_u}-\frac{\nabla\beta}{W_\beta}\right)\\
=\int_{\Delta^n}\left(\frac{\nabla
u}{W_u}-\frac{\nabla\beta}{W_\beta}\right)\cdot\eta
$$
where $W_u^2=1+|\nabla u|^2$ and $W_\beta^2=1+|\nabla\beta|^2$.

Thus

\begin{align*}
0=\int_{A_i^n}\left(\frac{\nabla
u}{W_u}-\frac{\nabla\beta}{W_\beta}\right)\cdot\eta&+\int_{B_i^n}\left(\frac{\nabla
u}{W_u}-\frac{\nabla\beta}{W_\beta}\right)\cdot\eta\\
&+\int_{C_i^n}\left(\frac{\nabla
u}{W_u}-\frac{\nabla\beta}{W_\beta}\right)\cdot\eta+\int_{\partial
B_y}\left(\frac{\nabla u}{W_u}-\frac{\nabla\beta}{W_\beta}\right)\cdot\eta.
\end{align*}

By theorem 5.1 in \cite{FolhaMelo}, these integrals can be estimated. We have
$$
0< |A_i^n|-|A_i^n|+|B_i^n|-|B_i^n|+2|C_i^n|+\int_{\partial
B_y}\left(\frac{\nabla u}{W_u}-\frac{\nabla\beta}{W_\beta}\right)\cdot\eta.
$$

Then
$$
\int_{\partial B_y}\left(\frac{\nabla
u}{W_u}-\frac{\nabla\beta}{W_\beta}\right)\cdot\eta>-2|C_i^n|.
$$

Now, we have that $\beta\leq u+\epsilon'$, suppose that $\beta< u+\epsilon'$, it
follows by Lemmas 1 and 2 in \cite{collin} that $\int_{\partial
B_y}\left(\frac{\nabla u}{W_u}-\frac{\nabla\beta}{W_\beta}\right)\cdot\eta< 0$.
This yields a contradiction, since $|C_i^n|\longrightarrow 0$ when
$n\longrightarrow \infty$, so we conclude that $\beta=u+\epsilon'$.
\end{proof}

Now we are able to prove our main theorem.
\begin{proof}[Proof of Theorem~\ref{main}] We know that $S$ is a properly
immersed CMC surface
contained in $D\times\mathbb{R}$ above $\Sigma$. Then, let $y\in D$, $B_y\subset
D$ and $\epsilon'$ as above. We have three cases to analyze

1) Suppose that there exists $p\in D$ such that $g(p)=u(p)$ (is this the case
we had let aside before Proposition~\ref{prop1}). In this case, by the maximum principle,
we conclude that $u=\beta$.

2) Suppose that $g> u$ and $\inf(g-u)=0$. In this case, by the Proposition
\ref{prop1} there exists $\beta$ solution of (\ref{eq1})-(\ref{eq2}) defined
over $D\setminus B_y$ such that $u\leq\beta\leq g$. Moreover, Proposition
\ref{prop2} assures that $\beta=u+\epsilon'$. This yields a contradiction, since
we assume	 that $\inf (g-u)=0$.

3) Finally, suppose that $g>u$ and $\inf (g-u)=\alpha>0$. Then, pushing up
$\Sigma$ by a vertical translations, that is, by considering $u+\alpha$ instead
of $u$, we have now that $g\geq u+\alpha$ and $\inf (g-\alpha-u)=0$, this case
reduces to cases (1) and (2) and we conclude that $\beta=u+\alpha$, where
$\alpha$ is a constant.
\end{proof}

\end{document}